\newtheorem{theorem}{Theorem}[section]
\newtheorem{lemma}[theorem]{Lemma}
\newtheorem{proposition}[theorem]{Proposition}
\newtheorem{corollary}[theorem]{Corollary}
\theoremstyle{definition}
\theoremstyle{remark}
\newtheorem{remark}[theorem]{Remark}
\numberwithin{equation}{section}
\def\fnote#1{\footnote}
\def\natu{{\mathbb N}}
\def\real{{\mathbb R}}
\def\ignora#1{}
\def\lbl#1{\label{#1}}        
\def\n3#1{\left\vert  \! \left\vert \! \left\vert \, #1 \, \right\vert \!
  \right\vert \! \right\vert }
\begin{document}

\title{Banach spaces with many boundedly complete basic sequences failing PCP}
\author{Gin\'{e}s L\'{o}pez P\'{e}rez}
\address{Universidad de Granada, Facultad de Ciencias.
Departamento de An\'{a}lisis Matem\'{a}tico, 18071-Granada
(Spain)} \email{glopezp@ugr.es}

\thanks{Partially supported by MEC (Spain) Grant MTM2006-04837 and Junta de Andaluc\'{\i}a Grants FQM-185 and Proyecto
de Excelencia P06-FQM-01438.} \subjclass{46B20, 46B22. Key words:
point of continuity property, boundedly complete sequences,
supershrinking sequences.} \maketitle \markboth{Gin\'{e}s
L\'{o}pez P\'{e}rez}{Boundedly complete sequences and PCP}
$$\parbox{3cm}

To\ my\ mother\ Francisca\ and\ my\ sister\ Isabel,\ in\ memoriam
$$
\begin{abstract}

We prove that there exist Banach spaces not containing $\ell_1$,
failing the point of continuity property and satisfying that every
semi-normalized basic sequence has a boundedly complete basic
subsequence. This answers in the negative the problem of the
Remark 2 in \cite{R1}.

\end{abstract}

\section{Introduction}
\par
\bigskip

Recall that a Banach space is said to have the point of continuity
property (PCP) provided every non-empty closed and bounded subset
admits a point of continuity of the identity map from the weak to
norm topologies. It is known that Banach spaces with Radon-Nikodym
property, including separable dual spaces, satisfy PCP, but the
converse is false (see \cite{BR}). The PCP has been characterized
for separable Banach spaces in \cite{BR} and \cite{GM}, and this
characterization implies that Banach spaces with PCP have many
boundedly complete basic sequences, and so many subspaces which
are separable dual spaces. As PCP is separably determined
\cite{B}, that is, a Banach space satisfies PCP if every separable
subspace has PCP, it is natural looking for a sequential
characterization of PCP. In this sense, it has been proved in
\cite{R1} that every semi-normalized basic sequence in a Banach
space with PCP has a boundedly complete subsequence. The converse
of the above result is false in general, but it is open for Banach
spaces not containing $\ell_1$ (see Remark 2 in \cite{R1}). The
goal of this note is to prove in corollary \ref{fin} that there
exist Banach spaces failing PCP and not containing $\ell_1$ such
that e\-ver\-y semi-normalized basic sequence has a boundedly
complete subsequence. Concretely, the space $B_\infty$, the
natural predual of the space $JT_\infty$, constructed in \cite{GM}
is the desired example.

We begin with some notation and preliminaries. Let $X$ be a Banach
space and let $\{e_n\}$ be a basic sequence in $X$. $\{e_n\}$ is
said to be semi-normalized if $0<\inf_n\Vert e_n\Vert\leq
\sup_n\Vert e_n\Vert <+\infty$, $X^*$ denotes the topological dual
of $X$ and the closed linear span of $\{e_n\}$ is denoted by
$[e_n]$. $\{e_n\}$ is called\begin{enumerate} \item[i)] {\it
boundedly complete} provided whenever scalars $\{\lambda_i\}$
satisfy\break $\sup_n\Vert\sum_{i=1}^n\lambda_ie_i\Vert<+\infty$,
then $\sum_n\lambda_ne_n$ converges. \item[ii)] {\it shrinking} if
the scalar sequence $\{\Vert f_{\mid [e_n, e_{n+1},
\ldots]}\Vert\}$ converges to zero $\forall f\in X^*$. \item[iii)]
{\it supershrinking} provided $\{e_n\}$ is shrinking and whenever
scalars $\{\lambda_i\}$ satisfy
$\sup_n\Vert\sum_{i=1}^n\lambda_ie_i\Vert<+\infty$ and
$\{\lambda_i\}\to  0$, then $\sum_n\lambda_ne_n$ converges.
\item[iv)] {\it strongly summing} provided is a weakly Cauchy
sequence and when\-ever scalars $\{\lambda_i\}$ satisfy
$\sup_n\Vert\sum_{i=1}^n\lambda_ie_i\Vert<+\infty$, then
$\sum_n\lambda_n$ converges. \end{enumerate}

A boundedly complete basic sequence spans a dual space and a
shrinking basic sequence $\{e_n\}$ spans a subspace whose dual has
a basis $\{f_n\}$, called the sequence of associated functionals
to $\{e_n\}$. A boundedly complete and shrinking basic sequence
spans a reflexive subspace and a basic sequence in a reflexive
space is  both boundedly complete and shrinking \cite {LZ}.

The supershrinking basic sequences appear in \cite{G1} and
\cite{G2}, where it is proved that a Banach space $X$ with a
supershrinking basis not containing $c_0$ is somewhat order one
quasireflexive, whenever $X$ not contains isomorphic subspaces to
$c_0$. Then $X$ has many boundedly complete basic sequences. The
space $B_\infty$ has a supershrinking basis (see \cite{G1} and
theorem IV.2 in \cite{GM}), not contains $c_0$ and fails PCP
\cite{GM}, so $B_{\infty}$ is a good candidate to be the desired
example. Other examples with a supershrinking basis are $c_0$ and
$B$, the natural predual of James tree space $JT$ \cite{GM}. It is
worth to mention that a semi-normalized basis of a Banach space
$X$ is supershrinking if and only if
\begin{equation}\lbl{igualdad} \{x^{**}\in
X^{**}:\lim_nx^{**}(f_n)=0\}=X \end{equation}
 where $\{f_n\}$ is
the associated functional sequence \cite{G1}.

The strongly summing basic sequences appear in \cite{R2}, where it
is proved the remarkable $c_0-$theorem, which assures that every
weak Cauchy non-trivial sequence in a Banach space not containing
$c_0$, has a strongly summing basic subsequence. A weak Cauchy
sequence in a Banach space is said to be non-trivial if does not
converge weakly. Finally, we recall that if $\{e_n\}$ is a
strongly summing sequence, then $\{v_n\}$ is a basic sequence,
where $\{v_n\}$ is the diference sequence of $\{e_n\}$, that is,
$v_1=e_1$ and $v_n=e_n-e_{n-1}$ for $n>1$ (\cite{R2}).

There is a very easy connection between supershrinking, strongly
summing and boundedly complete basic sequences, which implicitly
appears in \cite{R1}. We give it here for sake of completeness.

\begin{lemma}\lbl{diferencias} Let $\{e_n\}$ a semi-normalized strongly summning
basic sequence with diference sequence $\{v_n\}$. If $\{v_n\}$ is
supershrinking, then $\{e_n\}$ is boun\-ded\-ly complete. In fact,
$[e_n]$ is order one quasireflexive, that is, $[e_n]$ has
codimension $1$ in $[e_n]^{**}$\end{lemma}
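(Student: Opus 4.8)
The plan is to exploit two elementary identities relating the partial sums of $\{e_n\}$ and $\{v_n\}$: the telescoping relation $e_n=\sum_{j=1}^n v_j$ (so that $[e_n]=[v_n]=:W$), and the Abel summation $\sum_{i=1}^n a_i v_i=a_n e_n+\sum_{i=1}^{n-1}(a_i-a_{i+1})e_i$. Since $\{v_n\}$ is shrinking, I would first record that $W^*=[f_n]$ for the associated functionals $\{f_n\}$, and that every $x^{**}\in W^{**}$ is determined by its coordinates $a_k=x^{**}(f_k)$, which satisfy $\sup_n\Vert\sum_{i=1}^n a_i v_i\Vert<+\infty$; conversely each such bounded-partial-sum sequence defines an element of $W^{**}$. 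I would also observe that $\{e_n\}$, being weakly Cauchy, defines $\theta\in W^{**}$ by $\theta(g)=\lim_n g(e_n)$, so that $\theta(f_k)=1$ for every $k$ (the ``all ones'' coordinates); moreover $\theta\notin W$, because a semi-normalized basic sequence cannot converge in norm (its limit would be a nonzero vector annihilated by every coordinate functional, hence zero).

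For bounded completeness, suppose $\sup_n\Vert\sum_{i=1}^n\lambda_i e_i\Vert<+\infty$. The strongly summing hypothesis gives that $\sum_i\lambda_i$ converges, so the tails $\mu_j=\sum_{i=j}^\infty\lambda_i$ are well defined and $\mu_j\to 0$. A direct computation yields $\sum_{j=1}^n\mu_j v_j=\sum_{i=1}^n\lambda_i e_i+\mu_{n+1}e_n$; since $\{e_n\}$ is bounded and $\mu_{n+1}\to0$, the right-hand side is bounded, so $\sup_n\Vert\sum_{j=1}^n\mu_j v_j\Vert<+\infty$. Now $\{v_n\}$ supershrinking together with $\mu_j\to0$ forces $\sum_j\mu_j v_j$ to converge, and then $\sum_{i=1}^n\lambda_i e_i=\sum_{j=1}^n\mu_j v_j-\mu_{n+1}e_n$ converges as well. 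Hence $\{e_n\}$ is boundedly complete.

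For order-one quasireflexivity I would show $W^{**}=W\oplus\langle\theta\rangle$. Take $x^{**}\in W^{**}$ with coordinates $a_k=x^{**}(f_k)$; these are bounded (as $\sup_k\Vert f_k\Vert<\infty$) and have bounded $v$-partial sums. From the Abel identity and boundedness of $(a_n e_n)$ I get $\sup_n\Vert\sum_{i=1}^{n-1}(a_i-a_{i+1})e_i\Vert<+\infty$, so the strongly summing property of $\{e_n\}$ makes $\sum_i(a_i-a_{i+1})$ converge; telescoping then shows $a_n\to c$ for some scalar $c$. Consequently $x^{**}-c\theta$ has coordinates $a_k-c\to0$, and $\sum_{i=1}^n(a_i-c)v_i=\sum_{i=1}^n a_i v_i-c\,e_n$ is bounded, so supershrinking yields convergence of $\sum_k(a_k-c)v_k$ to some $w\in W$. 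Since $w$ and $x^{**}-c\theta$ share all coordinates, $x^{**}=w+c\theta$, which proves that $W$ has codimension $1$ in $W^{**}$.

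The routine points are the two summation identities and the standard description of $W^{**}$ for a shrinking basis. The crux, and the step I expect to demand the most care, is the quasireflexivity argument: recognizing that the strongly summing property of $\{e_n\}$ is exactly what forces the coordinate sequence $(a_n)$ of an arbitrary bidual element to converge (via the telescoping sum), which is what pins the codimension at $1$ rather than something larger; the supershrinking property then disposes of the resulting null-coordinate remainder.
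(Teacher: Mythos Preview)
Your argument is correct. The bounded--completeness half is identical to the paper's: you form the tails $\mu_j=\sum_{i\ge j}\lambda_i$, use the same Abel/telescoping identity $\sum_{j\le n}\mu_j v_j=\sum_{i<n}\lambda_i e_i+\mu_n e_n$ (your version with $\mu_{n+1}e_n$ is the same equality rewritten), and invoke supershrinking.

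For the quasireflexivity, however, you and the paper proceed differently. The paper works on the \emph{dual} side: it writes the biorthogonal functionals as $e_n^{*}=v_n^{*}-v_{n+1}^{*}$, observes that $[e_n]^{*}=[v_n^{*}]=[e_n^{*}]\oplus[v_1^{*}]$ (the weak$^{*}$ limit of $\{e_n\}$ separates $v_1^{*}$ from $[e_n^{*}]$), and then uses the bounded completeness just proved to identify $[e_n^{*}]^{*}$ with $[e_n]$; dualizing the codimension--one decomposition of $[e_n]^{*}$ gives codimension one in the bidual. You instead work directly in $W^{**}$ via the shrinking basis $\{v_n\}$: from the Abel identity and the \emph{strongly summing} hypothesis you force the $v$--coordinate sequence $(a_k)$ of an arbitrary $x^{**}$ to converge, and then the supershrinking characterization (equation~(1.1) in the paper) places the null--coordinate remainder $x^{**}-c\theta$ back in $W$. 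Your route is more explicit---it actually exhibits $W^{**}=W\oplus\langle\theta\rangle$ and pinpoints where each hypothesis is used---while the paper's is shorter because it recycles the bounded--completeness conclusion rather than rerunning the supershrinking step.

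Two small points of presentation. First, your claim that $(a_n)$ is bounded relies on $\sup_k\Vert f_k\Vert<\infty$, which needs $\{v_n\}$ to be bounded away from zero; this is harmless here (supershrinking bases in the paper are tacitly semi-normalized), but it is worth saying. Second, your justification of $\theta\notin W$ slips from ``weak limit'' to ``norm limit'': the clean argument is that $\theta\in W$ would mean $e_n\to\theta$ weakly, hence $\theta=0$ (weak limits of basic sequences are zero), contradicting $\theta(f_1)=1$; alternatively, $\theta\in W$ forces $\theta=\sum_k v_k$ in norm, i.e.\ $e_n\to\theta$ in norm, and then your parenthetical applies.
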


\begin{proof} Let $\{\lambda_n\}$ be scalars so that
$\sup_n\Vert\sum_{i=1}^n\lambda_1e_i\Vert<+\infty$. We have to
prove that $\sum_n\lambda_ne_n$ converges in order to obtain that
$\{e_n\}$ is boundedly complete. As $\{e_n\}$ is strongly summing,
hence $\sum_n\lambda_n$ converges. Define
$\mu_n=\sum_{i=n}^{+\infty}\lambda_i$ $\forall n$. Then
$\{\mu_n\}$ converges to zero and
\begin{equation}\lbl{diferencia}
\sum_{i=1}^n\mu_iv_i=\sum_{i=1}^{n-1}\lambda_ie_i+\mu_ne_n\
\forall n\in \natu
\end{equation}
So, $\sup_n\Vert\sum_{i=1}^n\mu_iv_i\Vert<+\infty$ and then
$\sum_n\mu_nv_n$ converges, by hypothesis. Finally,
$\sum_n\lambda_ne_n$ converges by \ref{diferencia}, since
$\{\mu_n\}\to 0$.

Now, we conclude that $[e_n]$ is order one quasireflexive. For
this, put $e_n^*=v_n^*-v_{n+1}^*$, where $\{v_n^*\}$ is the
associated functional sequence to $\{v_n\}$. Then $\{e_n^*\}$ is
the associated functional sequence to $\{e_n\}$. Observe that
$[e_n]^*=[v_n^*]$, since $\{v_n\}$ is shrinking. Hence, $[e_n^*]$
has codimension $1$ in $[e_n]^*$, since $x^{**}(e_n^*)=0$ for
every $n$ and $x^{**}(v_1^*)=1$, where $x^{**}(x^*)=\lim_n
x^*(e_n)$ for every $x^*\in [e_n]^*$ exists because $\{e_n\}$ is
weakly Cauchy. In fact, $[e_n]^*=[e_n^*]\oplus [v_1^*]$. But
$[e_n^*]^*$ is canonically isomorphic to $[e_n]$, since $\{e_n\}$
is a boundedly complete sequence. Then $[e_n]$ has codimension $1$
in $[e_n]^{**}$.\end{proof}

\section{Main results}
\par
\bigskip

The corollary \ref{fin} announced in the introduction will be
deduced from the following more general result.

\begin{theorem}\lbl{teorema} Let $X$ be a Banach space with a
semi-normalized supershrinking basis, not containing $c_0$. Then
every non-trivial weak Cauchy sequence has a boundedly complete
basic subsequence.
\end{theorem}

\bigskip

Before prove this theorem, we need the following stability
property of supershrinking basic block sequences.

\begin{lemma}\lbl{bloque} Let $X$ be a Banach space with a semi-normalized supershrinking basis
$\{e_{n}\}$. If
$v_{n}=\sum_{k=\sigma(n-1)+1}^{\sigma(n)}\lambda_{k}e_{k}$ is a
basic block of $\{e_{n}\}$ with $\{\lambda_{n}\}$ bounded, then
$\{v_n\}$ is a supershrinking basic sequence.\end{lemma}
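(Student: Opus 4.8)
The plan is to verify directly the two requirements in definition (iii): that $\{v_n\}$ is shrinking, and that it has the null-coefficient bounded completeness property. First I would record that $\{v_n\}$ is a block basic sequence of $\{e_n\}$, hence itself basic; let $K$ denote a basis constant valid for $\{e_n\}$ (and thus for $\{v_n\}$). Shrinking is immediate: for fixed $f\in X^*$ the tail $[v_n,v_{n+1},\ldots]$ is contained in $[e_{\sigma(n-1)+1},e_{\sigma(n-1)+2},\ldots]$, so $\Vert f_{\mid[v_n,v_{n+1},\ldots]}\Vert\le\Vert f_{\mid[e_{\sigma(n-1)+1},e_{\sigma(n-1)+2},\ldots]}\Vert$, and the right-hand side tends to $0$ as $n\to\infty$ since $\{e_n\}$ is shrinking and $\sigma(n-1)+1\to\infty$.

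For the main requirement, take scalars $\{a_n\}$ with $\{a_n\}\to 0$ and $\sup_n\Vert\sum_{i=1}^na_iv_i\Vert<+\infty$; I must show $\sum_na_nv_n$ converges. The idea is to expand everything back in the basis $\{e_n\}$ and invoke that $\{e_n\}$ is itself supershrinking. Define $\mu_k=a_i\lambda_k$ whenever $\sigma(i-1)+1\le k\le\sigma(i)$, so that $\sum_{i=1}^na_iv_i=\sum_{k=1}^{\sigma(n)}\mu_ke_k$. Writing $S_m=\sum_{k=1}^m\mu_ke_k$, the hypothesis gives $M:=\sup_n\Vert S_{\sigma(n)}\Vert<+\infty$, but this only controls the partial sums at the block endpoints $m=\sigma(n)$.

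The heart of the argument, and the step I expect to be the main obstacle, is upgrading this to a uniform bound on all partial sums $S_m$. For $\sigma(n-1)<m\le\sigma(n)$ I would split $S_m=S_{\sigma(n-1)}+a_n\sum_{k=\sigma(n-1)+1}^m\lambda_ke_k$. The first term is bounded by $M$. For the second, the basis-constant estimate for a sub-block sum gives $\Vert\sum_{k=\sigma(n-1)+1}^m\lambda_ke_k\Vert\le 2K\Vert v_n\Vert$, whence $\Vert a_n\sum_{k=\sigma(n-1)+1}^m\lambda_ke_k\Vert\le 2K\Vert a_nv_n\Vert=2K\Vert S_{\sigma(n)}-S_{\sigma(n-1)}\Vert\le 4KM$. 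Therefore $\sup_m\Vert S_m\Vert\le(1+4K)M<+\infty$. The boundedness of $\{\lambda_n\}$ enters here only in guaranteeing that the block structure is well defined; the crucial control of $\Vert a_nv_n\Vert$ comes precisely from its being a difference of two bounded partial sums.

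Finally, since the block index $i\to\infty$ as $k\to\infty$, we have $\mu_k=a_i\lambda_k\to 0$, because $a_i\to 0$ and $\{\lambda_k\}$ is bounded. Thus $\{\mu_k\}$ is a null scalar sequence with $\sup_m\Vert\sum_{k=1}^m\mu_ke_k\Vert<+\infty$, and the supershrinking property of $\{e_n\}$ yields that $\sum_k\mu_ke_k$ converges. In particular its partial sums along $m=\sigma(n)$ converge, that is, $\sum_{i=1}^na_iv_i=S_{\sigma(n)}$ converges, so $\sum_na_nv_n$ converges. This establishes property (iii) for $\{v_n\}$ and completes the proof that $\{v_n\}$ is supershrinking.
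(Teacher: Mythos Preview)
Your argument is correct. You verify definition~(iii) directly: shrinkingness via the tail-space inclusion, and the null-coefficient bounded-completeness by expanding in the $\{e_n\}$-basis, extending the bound on $\Vert S_{\sigma(n)}\Vert$ to all partial sums $\Vert S_m\Vert$ via the projection estimate, and then invoking the supershrinking property of $\{e_n\}$ on the null sequence $\{\mu_k\}$. All steps are sound.

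The paper takes a different and shorter route. It uses the bidual characterization~(\ref{igualdad}): a semi-normalized (shrinking) basis $\{e_n\}$ with biorthogonal functionals $\{f_n\}$ is supershrinking if and only if every $x^{**}\in X^{**}$ with $\lim_n x^{**}(f_n)=0$ already lies in $X$. Letting $\{g_n\}$ be the functionals biorthogonal to $\{v_n\}$, one has $f_k|_{[v_n]}=\lambda_k g_n$ for $\sigma(n-1)+1\le k\le\sigma(n)$, so any $y^{**}\in[v_n]^{**}$ with $y^{**}(g_n)\to 0$ satisfies $y^{**}(f_k)=\lambda_k\,y^{**}(g_n)\to 0$ (here is exactly where boundedness of $\{\lambda_k\}$ is used), hence $y^{**}\in X$ and therefore $y^{**}\in[v_n]$. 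This avoids the partial-sum bookkeeping entirely. Your approach has the virtue of being self-contained from the definition, without appealing to the equivalence~(\ref{igualdad}); the paper's approach is more conceptual and shows immediately why the bounded-coefficient hypothesis is the natural one. One small expository point: your parenthetical remark that boundedness of $\{\lambda_n\}$ ``enters only in guaranteeing that the block structure is well defined'' is misleading --- as you yourself use two paragraphs later, it is essential for concluding $\mu_k\to 0$.
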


\begin{proof} Let $\{f_n\}$, $\{g_{n}\}$ be the sequences of associated functionals
to $\{e_n\}$ and $\{v_{n}\}$, respectively. Then
$f_{k}=\lambda_{k}g_{n}$ whenever $\sigma(n-1)+1\leq k\leq
\sigma(n)$. In order to show that $\{v_n\}$ is a supershrinking
basic sequence we check the equality \ref{igualdad}.

Pick $y^{**}\in [v_n]^{**}$ with $\lim_{n}y^{**}(g_{n})=0$ then
$\lim_{n}y^{**}(f_{n})=0$, since $\{\lambda_{n}\}$ is bounded. So,
$y^{**}\in X$ and $\{v_n\}$ is supershrinking.\end{proof}

\bigskip

Now, we show that Banach spaces with a supershrinking basis
without copies of $c_0$ contain many reflexive subspaces.

\begin{proposition}\lbl{elton} Let $X$ be a Banach space with a
semi-normalized supershrinking basis $\{e_{n}\}$ without
isomorphic subspaces to $c_{0}$. Then every subsequence of
$\{e_{n}\}$ has a further subsequence whose closed linear span is
a reflexive subspace.
\end{proposition}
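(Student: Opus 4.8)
The plan is to reduce the reflexivity of a well-chosen span to the two James-type properties recalled in the preliminaries: a basic sequence spans a reflexive subspace as soon as it is simultaneously shrinking and boundedly complete. Since being shrinking will essentially come for free, the whole task becomes the extraction of a boundedly complete subsequence. First I fix an arbitrary subsequence $\{e_{n_k}\}$ of the given basis. Viewing $\{e_{n_k}\}$ as a basic block of $\{e_n\}$ with all coefficients equal to $1$ (in particular bounded), Lemma \ref{bloque} shows that $\{e_{n_k}\}$ is again a semi-normalized supershrinking basic sequence. In particular it is shrinking, and since the basis is semi-normalized this forces $e_{n_k}\to 0$ weakly: for each functional $y^*$ the norms of its restrictions to the tails tend to $0$, so $|y^*(e_{n_k})|\to 0$. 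Thus $\{e_{n_k}\}$ is a semi-normalized weakly null sequence lying in the subspace $[e_{n_k}]$, which inherits from $X$ the property of containing no isomorphic copy of $c_0$.

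The heart of the argument is to produce, inside this weakly null semi-normalized sequence, a boundedly complete subsequence, and here I would invoke Elton's theorem. Applied to the normalized (after rescaling) weakly null sequence $\{e_{n_k}\}$, it yields a further subsequence $\{e_{n_{k_j}}\}$ which is either equivalent to the unit vector basis of $c_0$ or boundedly complete. The hypothesis that $X$ contains no isomorphic copy of $c_0$ rules out the first alternative, since the span $[e_{n_{k_j}}]\subseteq X$ could not then contain $c_0$. Hence the surviving alternative must occur, and $\{e_{n_{k_j}}\}$ is boundedly complete. I expect this dichotomy, and in particular the observation that the absence of $c_0$ is exactly what forces the boundedly complete side of Elton's alternative, to be the main obstacle and the only non-elementary ingredient of the proof.

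It remains to assemble the pieces. The further subsequence $\{e_{n_{k_j}}\}$ is itself a bounded-coefficient block of $\{e_n\}$, so Lemma \ref{bloque} again applies and shows that it is supershrinking, hence shrinking. Being at the same time shrinking and boundedly complete, it spans, by the James characterization recalled in the preliminaries, a reflexive subspace of $X$. Since $\{e_{n_k}\}$ was an arbitrary subsequence of $\{e_n\}$, this proves that every subsequence of $\{e_n\}$ admits a further subsequence whose closed linear span is reflexive, as claimed.
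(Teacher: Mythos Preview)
Your overall strategy matches the paper's: pass to a subsequence, apply Elton's theorem because $c_0$ does not embed, and then combine shrinking with bounded completeness to get reflexivity via James. The gap is in your statement of Elton's theorem. Elton's dichotomy does \emph{not} assert that a normalized weakly null sequence with no $c_0$-subsequence has a boundedly complete subsequence. What it actually yields is a subsequence $\{e_{\sigma(n)}\}$ with the weaker property
\[
\{a_i\}\notin c_0 \ \Longrightarrow\ \lim_k\Big\|\sum_{i=1}^{k} a_i e_{\sigma(i)}\Big\|=+\infty,
\]
equivalently: bounded partial sums force $\{a_i\}\in c_0$. This by itself does \emph{not} say that the series $\sum a_i e_{\sigma(i)}$ converges, which is what bounded completeness requires. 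In your write-up you only use supershrinkingness (through Lemma~\ref{bloque}) to conclude ``shrinking'', so the step from Elton's conclusion to bounded completeness is genuinely missing.

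The fix is exactly the one the paper carries out, and you already have the ingredient in hand. Once Elton gives you that bounded partial sums imply $\{a_i\}\to 0$, you invoke the supershrinking property of $\{e_{\sigma(n)}\}$ (which you correctly obtained from Lemma~\ref{bloque}): by definition of supershrinking, bounded partial sums together with $\{a_i\}\to 0$ force convergence of $\sum a_i e_{\sigma(i)}$. Chaining these two implications gives bounded completeness. With this correction your argument coincides with the paper's.
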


\begin{proof}
It is clear that it is enough to prove that $\{e_{n}\}$ has a
subsequence whose closed linear span is a reflexive subspace.

For this, we apply the Elton Theorem \cite{D} to obtain
$\{e_{\sigma(n)}\}$ a basic subsequence of $\{e_{n}\}$ such that
$$\lim_{k}\Vert \sum_{i=1}^{k}a_{i}e_{\sigma(i)}\Vert =+\infty\
\forall \{a_{i}\}\notin c_{0}.$$ We put $Y=[e_{\sigma(n)}]$. To
see that $Y$ is reflexive it suffices to prove that
$\{e_{\sigma(n)}\}$ is a boundedly complete basic sequence in $Y$,
since $\{e_{\sigma(n)}\}$ is a shrinking basic sequence.

Let $\{\lambda_{n}\}\subset \real$ such that $\sup_{n}\Vert
\sum_{k=1}^{n}\lambda_{k}e_{\sigma(k)}\Vert <+\infty$. Then
$\{\lambda_{n}\}\in c_{0}$ and $\sum_{n}\lambda_{n}e_{\sigma(n)}$
converges, since $\{e_{\sigma(n)}\}$ is supershrinking, that is
$Y$ is reflexive.\end{proof}

\bigskip

\begin{proof} {\it of theorem} \ref{teorema}. Let $\{f_n\}$ be the functional sequence associated to $\{e_n\}$ and
assume, without loss of generality that $\{e_n\}$ is monotone,
that is, $\Vert Q_n\Vert\leq 1$ $\forall n\in \natu$, where
$\{Q_n=\sum_{k=1}^nf_k\}$ is the sequence of the projections of
the basis $\{e_n\}$. Put $M=\sup_n\Vert e_n\Vert$ and let
$\{x_n\}$ be a non-trivial weak Cauchy in $X$. By the
$c_0$-theorem, we can assume that there is a strongly summing
basic subsequence of $\{x_n\}$, so we in fact assume that
$\{x_n\}$ itself is a non-trivial weak Cauchy strongly summing
basic sequence.

We claim that there exist integers $0<\sigma(1)<\sigma(2)<\ldots$,
$0=m_0<1=m_1<m_2<\ldots$ and $\{v_n\}$ a basic sequence such
that\begin{enumerate} \item[i)]
\begin{equation}
\vert f_h(x_{\sigma(n)})-f_h(x_k)\vert<\frac{1}{2^{n+3}m_{n}M}\
\forall k\geq \sigma(n),\ h\leq m_{n},\ n\in \natu
\end{equation}
\item[ii)]$v_n\in [e_k:m_{n-1}+1\leq k\leq m_{n+1}]\ \forall n\in
\natu$ \item[iii)] $\Vert v_n-z_n\Vert<1/2^{n+1} \forall n\in
\natu$,\end{enumerate} where $\{z_n\}$ is the diference sequence
of $\{x_{\sigma(n)}\}$, that is, $z_1=x_{\sigma(1)}$,\break
$z_n=x_{\sigma(n)}-x_{\sigma(n-1)}$ for all $n>1$.

As $\{x_n\}$ is weakly Cauchy, there is $\sigma(1)\in \natu$ such
that
\begin{equation}\lbl{1}\vert f_1(x_{\sigma(1)})-f_1(x_k)\vert <1/2^4M\
\forall k\geq \sigma(1).\end{equation}
 Choose $m_2>m_1$ such that
$\Vert \sum_{n=m_2+1}^{+\infty}f_n(x_{\sigma(1)})e_n\Vert<1/2^2$
and put\break $v_1=\sum_{n=1}^{m_2}f_n(x_{\sigma(1)})e_n$. Then
$\Vert z_1-v_1\Vert=\Vert
\sum_{n=m_2+1}^{+\infty}f_n(x_{\sigma(1)})e_n\Vert<1/2^2$.

Pick now $\sigma(2)>\sigma(1)$ such that
\begin{equation}\lbl{2}
\vert f_h(x_{\sigma(2)})-f_h(x_k)\vert<\frac{1}{2^5m_2M}\ \forall
k\geq\sigma(2),\ h\leq m_2 \end{equation} Chose $m_3>m_2$ such
that $\Vert
\sum_{n=m_3+1}^{+\infty}(f_n(x_{\sigma(2)})-f_n(x_{\sigma(1)}))e_n\Vert<1/2^4$.

Put now
$v_2=\sum_{n=m_1+1}^{m_3}(f_n(x_{\sigma(2)})-f_n(x_{\sigma(1)}))e_n$.
Then $\Vert z_2-v_2\Vert\leq \Vert
(f_1(x_{\sigma(2)})-f_1(x_{\sigma(1)}))e_1\Vert
+\Vert\sum_{n=m_3+1}^{+\infty}(f_n(x_{\sigma(2)})-f_n(x_{\sigma(1)}))e_n\Vert<1/2^4+1/2^4=1/2^3$,
by \ref{1} and \ref{2}.

Assume, inductively, that $m_2<m_3<\ldots<m_{n+1}
$,
$\sigma(2)<\sigma(3)<\ldots<\sigma(n)$, $v_1,v_2,\ldots,v_n$ have
been constructed such that
\begin{equation}\lbl{3}
\vert f_h(x_{\sigma(n)})-f_h(x_k)\vert<\frac{1}{2^{n+3}m_{n}M}\
\forall k\geq \sigma(n),\ h\leq m_{n}
\end{equation}
Pick now $m_{n+2}>m_{n+1}$ such that
\begin{equation}\lbl{4}
\Vert\sum_{n=m_{n+2}+1}^{+\infty}(f_n(x_{\sigma(n+1)})-f_n(x_{\sigma(n)}))e_n\Vert<1/2^{n+3}.
\end{equation}
Put
$v_{n+1}=\sum_{i=m_n+1}^{m_{n+2}}(f_i(x_{\sigma(n+1)})-f_i(x_{\sigma(n)}))e_i$.
Then $\Vert z_{n+1}-v_{n+1}\Vert\leq
\Vert\sum_{i=1}^{m_n}(f_i(x_{\sigma(n+1)})-f_i(x_{\sigma(n)}))e_i\Vert+
\Vert\sum_{i=m_{n+2}+1}^{+\infty}(f_i(x_{\sigma(n+1)})-f_i(x_{\sigma(n)}))e_i\Vert<1/2^{n+3}+1/2^{n+3}=1/2^{n+2}$,
by \ref{3} and \ref{4}.

Now, choose $\sigma(n+1)>\sigma(n)$ such that
\begin{equation}
\vert
f_h(x_{\sigma(n+1)})-f_h(x_k)\vert<\frac{1}{2^{n+4}m_{n+1}M}\
\forall k\geq \sigma(n+1),\ h\leq m_{n+1}.
\end{equation}
Then the induction is complete and the claim is proved.

From the claim, it is clear that $\{v_n\}$ is a basic sequence
equivalent to $\{z_n\}$, the diference sequence of
$\{x_{\sigma(n)}\}$, since $\sum_{n=1}^{+\infty}\Vert
z_n-v_n\Vert<1/2$ (see proposition 1.a.9 in \cite{LZ}). Also, we
obtain that
$[v_n,v_{n+1},\ldots]\subset[e_{m_{n-1}+1},e_{m_{n-1}+2},\ldots]$
$\forall n\in \natu$, so $\{v_n\}$ is a shrinking basic sequence,
since $\{e_n\}$ it is.

Now, let us see that $\{v_n\}$ is a supershrinking basic sequence.
For this, we chose $\{\lambda_n\}$ a scalar sequence such that
$\sup_n\Vert\sum_{k=1}^n\lambda_kv_k\Vert<+\infty$ and we have to
prove that $\sum_n\lambda_nv_n$ converges, whenever
$\{\lambda_n\}\to 0$.

From the proof of the claim
$v_1=\sum_{n=1}^{m_2}f_n(x_{\sigma(1)})e_n$, and for every $n>1$
$v_n=\sum_{k=m_{n-1}+1}^{m_{n+1}}(f_k(x_{\sigma(n)})-f_k(x_{\sigma(n-1)}))e_k$.

Put $\mu_i=\lambda_1f_i(x_{\sigma(1)})$ for $1\leq i\leq m_1$,
$\mu_i=\lambda_1f_i(x_{\sigma(1)})+\lambda_2(f_i(x_{\sigma(2)})-f_i(x_{\sigma(1)}))$
for $m_1+1\leq i\leq m_2$ and
$\mu_i=\lambda_{k-1}(f_i(x_{\sigma(k-1)})-f_i(x_{\sigma(k-2)}))+\lambda_k(f_i(x_{\sigma(k)})-f_i(x_{\sigma(k-1)}))$
for $m_{k-1}+1\leq i\leq m_k$ and $k>2$.

As $\{\lambda_n\}\to 0$, $\{e_n\}$ is a seminormalized basis of
$X$ and $\{x_n\}$ is bounded, we deduce that $\{\mu_n\}\to 0$.
Furthermore, we have the following equality for all $n\in \natu$:
\begin{equation}
\sum_{k=1}^{n}\lambda_kv_k=\sum_{k=1}^{m_n}\mu_ke_k+\sum_{k=m_n+1}^{m_{n+1}}\lambda_n(f_k(x_{\sigma(n)})-f_k(x_{\sigma(n-1)}))e_k
\end{equation}
Hence, whenever $m_n+1\leq p<m_{n+1},\ n>1$ we have
\begin{equation}\lbl{chorizo}
 \sum_{k=1}^p\mu_ke_k=\sum_{k=1}^{n}\lambda_kv_k+\sum_{k=m_n+1}^{p}\lambda_{n+1}(f_k(x_{\sigma(n+1)})-f_k(x_{\sigma(n)}))e_k
\end{equation}
$$-\sum_{k=p+1}^{m_{n+1}}\lambda_{n}(f_k(x_{\sigma(n)})-f_k(x_{\sigma(n-1)}))e_k$$

Now, as $\{x_n\}$ and $\{Q_n\}$ are bounded and $\{\lambda_n\}\to
0$, we obtain that
\begin{equation}\lbl{cero}
\lim_n\sum_{k=p+1}^{m_{n+1}}\lambda_{n}(f_k(x_{\sigma(n)})-f_k(x_{\sigma(n-1)}))e_k=
\end{equation}
$$\lim_n\sum_{k=m_n+1}^{p}\lambda_{n+1}(f_k(x_{\sigma(n+1)})-f_k(x_{\sigma(n)}))e_k=0,$$
since for every $m_n+1\leq p<m_{n+1}$, $n\in \natu,\ n>1$ we have:
\begin{equation}\lbl{proj}\sum_{k=p+1}^{m_{n+1}}\lambda_{n}(f_k(x_{\sigma(n)})-f_k(x_{\sigma(n-1)}))e_k
=\lambda_n(Q_{m_{n+1}}-Q_p)(x_{\sigma(n)}-x_{\sigma(n-1)}),\end{equation}
$$\sum_{k=m_n+1}^{p}\lambda_{n+1}(f_k(x_{\sigma(n+1)})-f_k(x_{\sigma(n)}))e_k=
\lambda_{n+1}(Q_p-Q_{m_n})(x_{\sigma(n+1)}-x_{\sigma(n)})$$ From
\ref{chorizo} and \ref{proj}, it can be deduced that
$\sup_p\Vert\sum_{n=1}^p\mu_ne_n\Vert<+\infty$ and so,
$\sum_n\mu_ne_n$ converges, since $\{\mu_n\}\to 0$ and $\{e_n\}$
is supershrinking. Then $\sum_n\lambda_nv_n$ converges by
\ref{chorizo} and \ref{cero} and we have proved that $\{v_n\}$ is
a supershrinking basic sequence equivalent to the diference
sequence of $\{x_{\sigma(n)}\}$. Finally, $\{x_{\sigma(n)}\}$ is
boundedly complete by lemma \ref{diferencias}, since it is
strongly summing. In fact, $[x_{\sigma(n)}]$ is order one
quasireflexive, by lemma \ref{diferencias}.\end{proof}

\bigskip

\begin{corollary}\lbl{fin} Let $X$ be a Banach space with a semi-normalized
supershrinking basis not containing $c_0$. Then every
semi-normalized basic sequence in $X$ has a boundedly complete
subsequence spanning a reflexive or an order one quasireflexive
subspace of $X$.
\end{corollary}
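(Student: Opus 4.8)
The plan is to assemble the corollary from the two cases already treated, namely non-trivial weak Cauchy sequences (Theorem \ref{teorema}) and reflexive block subspaces (Proposition \ref{elton}), the bridge between an arbitrary basic sequence and the basis $\{e_n\}$ being Rosenthal's $\ell_1$-theorem together with the Bessaga--Pe\l czy\'nski selection principle. First I would record that the supershrinking basis $\{e_n\}$ is in particular shrinking, so its coefficient functionals form a basis of $X^*$; hence $X^*$ is separable and $X$ contains no copy of $\ell_1$. Consequently Rosenthal's $\ell_1$-theorem applies: given a semi-normalized basic sequence $\{x_n\}$ in $X$, some subsequence, which I still denote $\{x_n\}$, is weakly Cauchy.

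I would then split according to whether this weakly Cauchy sequence is trivial. If $\{x_n\}$ is non-trivial, it is a non-trivial weak Cauchy sequence in $X$, so Theorem \ref{teorema} yields a boundedly complete basic subsequence $\{x_{\sigma(n)}\}$, and the proof of that theorem shows moreover (via Lemma \ref{diferencias}) that $[x_{\sigma(n)}]$ is order one quasireflexive. This is the first alternative in the statement.

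If instead $\{x_n\}$ is trivial, i.e. weakly convergent to some $x\in X$, I would first observe that $x=0$, so that $\{x_n\}$ is semi-normalized and weakly null. Indeed, choosing by Mazur's theorem successive convex blocks $w_k$ of $\{x_n\}$ with $\Vert w_k-x\Vert\to 0$, the $\{w_k\}$ form a block basic sequence of the basic sequence $\{x_n\}$, so $\Vert w_j\Vert\le K\Vert w_j-w_k\Vert$ for $j<k$ with $K$ the basis constant; letting $k\to\infty$ gives $\Vert w_j\Vert\le K\Vert w_j-x\Vert\to 0$, whence $\Vert x\Vert=\lim_j\Vert w_j\Vert=0$. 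Now the Bessaga--Pe\l czy\'nski selection principle (\cite{LZ}) provides a subsequence of $\{x_n\}$ equivalent to a semi-normalized block basis $\{u_k\}$ of $\{e_n\}$; since $\{e_n\}$ and $\{u_k\}$ are semi-normalized the block coefficients are bounded, so Lemma \ref{bloque} shows that $\{u_k\}$ is a supershrinking basic sequence. Then $[u_k]$ is a Banach space with a semi-normalized supershrinking basis which, being a subspace of $X$, contains no copy of $c_0$; applying Proposition \ref{elton} to $[u_k]$ produces a subsequence $\{u_{k_j}\}$ spanning a reflexive subspace. The matching subsequence of $\{x_n\}$ is equivalent to $\{u_{k_j}\}$, hence spans a reflexive subspace, whose basis is automatically boundedly complete (\cite{LZ}). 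This is the second alternative, and the corollary follows.

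The genuinely new content lives in Theorem \ref{teorema} and Proposition \ref{elton}, so the corollary is largely an assembly; I expect the trivial (weakly null) case to be the only delicate point, since it requires both the observation that a weakly convergent basic sequence is weakly null and the verification that the block basis selected by Bessaga--Pe\l czy\'nski has bounded coefficients, which is exactly what lets Lemma \ref{bloque} and Proposition \ref{elton} be invoked.
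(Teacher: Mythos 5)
Your proposal is correct and follows essentially the same route as the paper: pass to a weakly Cauchy subsequence via Rosenthal's $\ell_1$-theorem, handle the non-trivial case by Theorem \ref{teorema}, and in the weakly null case use the Bessaga--Pe\l czy\'nski selection principle to reduce to a block basis with bounded coefficients so that Lemma \ref{bloque} and Proposition \ref{elton} apply. The only difference is that you spell out two steps the paper leaves as standard (that a weakly convergent basic sequence is weakly null, and that semi-normalization forces the block coefficients to be bounded), which is a welcome amount of extra detail rather than a change of method.
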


\begin{proof} Let $\{x_n\}$ a semi-normalized basic sequence in
$X$. As $X$ not contains isomorphic subspaces to $\ell_1$, we can
assume that $\{x_n\}$ itself is weakly Cauchy, by the
$\ell_1-$theorem \cite{R3}. If $\{x_n\}$ is not weakly convergent,
then $\{x_n\}$ is a semi-normalized non-trivial weak Cauchy
sequence and $\{x_n\}$ has a boundedly complete subsequence
spanning an order one quasireflesive subspace, by theorem
\ref{teorema}, and we are done.

If $\{x_n\}$ is weakly convergent, then $\{x_n\}$ converges weakly
to zero, because $\{x_n\}$ is a basic sequence. Now, it is
straightforward construct a subsequence of $\{x_n\}$ equivalent to
a basic block of the basis. So, we can assume that $\{x_n\}$ is a
semi-normalized basic sequence equivalent to a basic block of the
basis. Following the proof of proposition 1.a.11 in \cite{LZ}, it
t is easy to construct this basic block satisfying the hypothesis
of lemma \ref{bloque}. Then $\{x_n\}$ is a supershrinking basic
subsequence and, by proposition \ref{elton}, $\{x_n\}$ has a
boundedly complete subsequence spanning a reflexive subspace, so
we are done.\end{proof}

\bigskip

As we announced in the introduction, it is enough apply the
corollary \ref{fin} to obtain the following

\begin{corollary}\lbl{qed} $B_{\infty}$ fails PCP, not contains isomorphic subspaces to $\ell_1$ and every semi-normalized
basic sequence in $B_{\infty}$ has a boundedly complete
subsequence spanning a reflexive or an order one quasireflexive
subspace.
\end{corollary}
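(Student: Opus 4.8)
The plan is to verify that $B_\infty$ satisfies the hypotheses of Corollary \ref{fin} and then read off the three assertions one at a time. First I would assemble the known structural facts about $B_\infty$ recorded in the introduction: as the natural predual of $JT_\infty$ constructed in \cite{GM}, the space $B_\infty$ carries a semi-normalized supershrinking basis $\{e_n\}$ (theorem IV.2 in \cite{GM} together with \cite{G1}), it contains no isomorphic copy of $c_0$, and it fails PCP. The failure of PCP is thus immediate from \cite{GM}, which settles the first assertion at no cost.

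Next I would establish that $B_\infty$ contains no isomorphic copy of $\ell_1$. The key observation is that a supershrinking basis is in particular shrinking, so the associated functional sequence $\{f_n\}$ is a basis of $B_\infty^*$; hence $B_\infty^*$ is separable. A Banach space with separable dual cannot contain $\ell_1$: an isomorphic embedding $\ell_1\hookrightarrow B_\infty$ would, by Hahn--Banach extension of functionals, make the restriction map $B_\infty^*\to\ell_1^*=\ell_\infty$ a quotient map, forcing the non-separable space $\ell_\infty$ to be a quotient of the separable space $B_\infty^*$, which is absurd. This yields the second assertion.

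Finally, since $\{e_n\}$ is a semi-normalized supershrinking basis and $B_\infty$ does not contain $c_0$, Corollary \ref{fin} applies verbatim: every semi-normalized basic sequence in $B_\infty$ has a boundedly complete subsequence whose closed linear span is either reflexive or order one quasireflexive. This is exactly the third assertion, and the proof is complete.

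The only step that needs an argument beyond direct citation is the exclusion of $\ell_1$, and even there the content is the standard separable-dual observation rather than anything peculiar to $B_\infty$; I therefore expect no genuine obstacle, since the entire substance of the statement is already carried by the preceding Corollary \ref{fin}.
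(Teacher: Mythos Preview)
Your proposal is correct and follows essentially the same route as the paper: cite the supershrinking basis and the absence of $c_0$ from \cite{GM}, deduce separability of the dual and hence the absence of $\ell_1$, record the failure of PCP from \cite{GM}, and then invoke Corollary~\ref{fin}. The only difference is that you spell out the standard separable-dual argument for excluding $\ell_1$, which the paper leaves implicit.
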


\begin{proof} The fact that $B_{\infty}$ has a semi-normalized supershrinking basis is
consequence of theorem IV.2 in \cite{GM}. So $B_{\infty}$ has
separable dual and not contains subspaces isomorphic to $\ell_1$.
Now, $B_{\infty}$ fails PCP and not contains subspaces isomorphic
to $c_0$ \cite{GM}. Finally, by corollary \ref{fin}, every
semi-normalized basic sequence in $B_{\infty}$ has a boundedly
complete subsequence spanning a reflexive or an order one
quasireflexive subspace of $B_{\infty}$.\end{proof}

\bigskip

Let $B$ be the natural predual of James tree space $JT$. It is
known that $B$ satisfies PCP, and also $B$ has a semi-normalized
supershrinking basis. (See \cite{LS} and \cite{G2}). As $B$ not
contains isomorphic subspaces to $c_0$, \cite{LS}, we can apply
the corollary \ref{fin}, as in corollary \ref{qed}, to obtain the
following

\begin{corollary} Every semi-normalized basic sequence in $B$ has
a boundedly complete subsequence spanning a reflexive or an order
one quasireflexive subspace of $B$.\end{corollary}

\bigskip

\begin{remark}i) It has been proved in \cite{DF} that a Banach space $X$ with
se\-pa\-rable dual satisfies PCP if, and only if, every weakly
null tree in the unit sphere of $X$ has a boundedly complete
branch, which can be easily deduced from \cite{G2}. Also, it is
shown in \cite{DF} that this characterization of PCP is not true
for sequences, by proving that every weakly null sequence in the
unit sphere of  $B_{\infty}$ has a boundedly complete subsequence,
while $B_{\infty}$ fails PCP. Hence the corollary \ref{qed}
improves this result, since every weakly null sequence in the unit
sphere of a Banach space has a semi-normalized basic subsequence.

ii) From corollary \ref{fin} one might think that the  good
sequential property in order to imply PCP for Banach spaces with
separable dual is that e\-ve\-ry semi-normalized basic sequence
has a subsequence spanning a reflexive subspace. And this is true,
but this property implies reflexivity. Indeed, assume that $X$ is
a Banach space satisfying that every semi-normalized basic
sequence has a subsequence spanning a reflexive subspace. Take a
bounded sequence $\{x_n\}$ in $X$ and prove that $\{x_n\}$ has a
weakly convergent subsequence. As $X$ not contains subspaces
isomorphic to $\ell_1$, then $\{x_n\}$ has a weak Cauchy
subsequence $\{y_n\}$, by the $\ell_1-$theorem. If $\{y_n\}$ is
not semi-normalized, then $\{y_n\}$ and so $\{x_n\}$ has a
subsequence weakly convergent to zero and we are done. Hence,
assume that $\{y_n\}$ is a semi-normalized weak Cauchy sequence in
$X$. If $\{y_n\}$ is not weakly convergent, then, by the
$c_0-$theorem, for example, $\{y_n\}$ has a semi-normalized basic
subsequence, since $X$ not contains isomorphic subspaces to $c_0$.
By hypo\-the\-sis, $\{y_n\}$ has a subsequence spanning a
reflexive subspace and hence, this subsequence is weakly
convergent to zero, so $\{x_n\}$ has a weakly convergent
subsequence and we are done.

iii) It is known that $B_{\infty}$ satisfies the convex point of
continuity property CPCP \cite{GM}, a weaker property than PCP. So
it is natural to ask weather a Banach space satisfies CPCP,
whenever every semi-normalized basic sequence has a boundedly
complete subsequence.\end{remark}

\bigskip

\end{document}